\newtheorem{df}{Definition}
\newtheorem{prop}{Proposition}
\newtheorem{theo}{Theorem}
\newtheorem{lem}{Lemma}
\title
[Conformal weldings]
{Conformal weldings in the Loewner equation and Weil--Petersson quasislit-disks}
\author[F. Tao]{Fei Tao} 
\address{Beijing International Center for Mathematical Research, Peking University, Beijing 100871, P. R. China} 
\email{ferrytau@pku.edu.cn}
\author[H. Wei]{Huaying Wei} 
\address{Center for applied Mathematics, Tianjin University, Tianjin 300072, P. R. China} 
\email{weihuaying2014@gmail.com}
\author[Y. Yang]{Yaosong Yang}
\address{Academy of Mathematics and Systems Science, Chinese Academy of Sciences, Beijing 100049, P. R. China}
\address{School of Mathematical Sciences, University of Chinese Academy of Sciences, Beijing 100049, P. R. China}
\email{yangyaosong@amss.ac.cn}
\subjclass[2020]{30C62, 30C75, 30H25}
\keywords{Loewner differential equation, quasiarc, conformal welding, Weil--Petersson geometry}
\thanks{Research supported by 
 the National Natural Science Foundation of China (Grant No. 12271218) and the National Key R\&D Program of China (2020YFA0712800)}
\begin{document}

\captionsetup[figure]{name={Fig.}}

\begin{abstract}
A simple arc $\Gamma = \gamma(0, T]$, growing into the unit disk $\mathbb D$ from its boundary, generates a driving term $\xi$ and a conformal welding $\phi$ through the Loewner differential equation. When $\Gamma$ is the slit of a Weil--Petersson quasislit-disk $\mathbb D\setminus\Gamma$, the Loewner transform and its inverse $\Gamma \leftrightarrow \xi$ have been well understood due to Y. Wang's work \cite{Yilin19JEMS}. We investigate the maps $\Gamma \leftrightarrow \phi$ in this case, giving a description of $\Gamma$ in terms of $\phi$. 

\end{abstract}

\maketitle

\section{Introduction}
 Let $\Gamma = \gamma[0, T]$ be a simple arc with $0 \notin \Gamma$, growing into the unit disk $\mathbb D$ from its boundary $\mathbb T$. For the unit disk $\mathbb D$ slitted by a simple arc $\gamma(0, t]$, $0 < t \leq T$, by the Riemann mapping theorem, there exists a unique conformal map $f_t\colon \mathbb D\setminus\gamma(0, t] \to \mathbb D$ with the normalization $f_t(0) = 0$ and $f_t'(0) = e^t$ (by reparametrizing $\gamma(0, t]$ if necessary). Charles Loewner showed that the evolution of $f_t$ satisfies the (downward) Loewner differential equation 
\begin{equation}\label{down}
  \partial_t f_t(z) =f_t(z) \frac{\lambda(t)+f_t(z)}{\lambda(t)-f_t(z)}, \qquad f_0(z)=z,
\end{equation}
for each $z \in \mathbb D$.
Here, $\lambda(t) =f_t(\gamma(t))$ is a continuous function taking values on $\mathbb T$. 
Loewner's approach was initially developed as a tool to study extremal problems in complex analysis \cite{Loewner23}. In particular, it played an essential role in solving the Bieberbach conjecture. Later, it arose in connection with probability theory by a stochastic process introduced by Oded Schramm \cite{Schramm00} called ``Stochastic Loewner Evolution" (or SLE). 

Rather than directly working with the downward Loewner differential equation \eqref{down}, it is more natural to work with the upward Loewner differential equation
\begin{equation}\label{up}
  \partial_t g_t(z) = - g_t(z) \frac{\xi(t)+g_t(z)}{\xi(t)-g_t(z)}, \qquad g_0(z)=z,
\end{equation}
for $z \in \mathbb D$ and continuous function $\xi(t)$ taking values on $\mathbb T$. 
In this case, for each $z \in \mathbb D$, the solution to the initial value problem \eqref{up} is unique for all $t \in [0, T]$, and it is a conformal map from $\mathbb D$ into $\mathbb D$ with the normalization $g_t(0) = 0$ and $g_t'(0) = e^{-t}$. If $(g_t)_{0 \leq t \leq T}$ is the solution to \eqref{up} with $\xi(t) = \lambda(T-t)$, then it is not true that $g_t(z) = f_t^{-1}(z)$ for all $t \in [0, T]$. However, it is true that $g_T(z) = f_T^{-1}(z)$ so that $g_T(\mathbb D) = \mathbb D \setminus \Gamma$ and $g_t(\mathbb D) = \mathbb D\setminus\Gamma_t$ is a slit disk with initial point $\xi(t)$ for all $t \in [0, T]$. 

Roughly speaking, the slit $\Gamma_t$ is formed as follows. Points on $\mathbb T$ move along $\mathbb T$ towards the singularity $\xi(t)$ according to \eqref{up} until they actually hit it. At this time they leave the circle and move into $\mathbb D$. At each time $t$, there are exactly two points, located on opposite sides of $\xi(0)$, that hit $\xi(t)$ at the same time. It seems like they are ``welded together". More formally, the \emph{hitting time} for $x \in \mathbb T$ is defined by
\[
\tau(x) = \sup\{t > 0: g_s(x) \neq \xi(s) \;\text{for all}\; s \in [0, t]\}.
\]
Set $\tau(x) = t$. If $t < \infty$, it is the first time that $g_t(x) = \xi(t)$ so that the two preimages $g_t^{-1}(\xi(t))$, $x$ and $y$, satisfy $\tau(x) = \tau(y)$. This yields a welding homeomorphism $\phi$, sending $x$ to $y$; that is, it interchanges the two points that hit the singularity at the same time. 

Since $\xi$ is defined on a finite interval $[0, T]$, the welding $\phi$ will not be defined on all $\mathbb T$. Set $I = g_T^{-1}(\Gamma) \subset \mathbb T$ to be an oriented arc from $\alpha^- $ to $\alpha^+$. We actually have $g_T(x) = g_T(y)$ for $x \in I^{-} = \langle\alpha^-, \xi(0)\rangle$ and $y \in I^{+} = \langle \xi(0), \alpha^+ \rangle$ if and only if $\phi(x) = y$.

The preceding discussion leads us to a natural sequence of mappings
\[
\xi \leftrightarrow \Gamma \leftrightarrow \phi.
\]
The first pair of maps has been extensively studied in the deterministic setting and also in the stochastic setting \cite{LindSharp05,MarshallRohde05,Schramm00,Yilin19JEMS,Wong14}. The second pair has also received some attention  \cite{LindSharp05,Timminimizers23,MarshallRohde05,Yilin19JEMS} serving as a transition for the study of the former. Among all, Marshall--Rohde \cite{MarshallRohde05} and Lind \cite{LindSharp05} proved that if $\Gamma$ is the slit of a quasislit-disk, then the welding homeomorphism   $\phi$ has certain properties (see Section 2 for precise statements) and thus $\xi \in \textrm{Lip}(\frac{1}{2})$. Conversely, if $\xi \in \textrm{Lip}(\frac{1}{2})$ with $\Vert\xi\Vert_{\frac{1}{2}} < 4$, then $\phi$ is as before and $\Gamma$ is the slit of a quasislit-disk.

In this paper, in view of the importance of the arguments in \cite{LindSharp05,MarshallRohde05}, we consider the maps $\Gamma \leftrightarrow \phi$ for the Weil--Petersson case, a topic of growing interest (see \cite{Timminimizers23,TimDrivers24,Yilin19JEMS} for more discussions) due to its potential applications in string theory and other fields.

\section{Preliminaries, the main result and its motivation}

Let $\Gamma$ be a simple arc with initial point $a\in \mathbb T$ and tip $b \in \mathbb D$ contained in $\mathbb D\setminus \{0\}$. We call a subdomain $D = \mathbb D\setminus\Gamma$ of $\mathbb D$ a \emph{slit disk}. For example, the domain $D_t = \mathbb D\setminus [t, 1]$, $0 < t < 1$, is a slit disk. We now introduce two kinds of slit disks with regularity, which will be the main attention of this paper, from the perspective of quasiconformal maps. To state them precisely, we first recall some preliminary definitions and facts from the quasiconformal theory; see \cite{BookAhlfors66} for additional background. 

A quasiconformal map $f$ on $\mathbb C$ is a homeomorphism whose gradient, interpreted in the sense of distributions, belongs to $L_{\textrm{loc}}^{2}(\mathbb C)$ and whose complex dilatation $\mu = f_{\bar z}/f_z$ belongs to $L^{\infty}(\mathbb C)$ bounded by some constant $k < 1$. 

Suppose now $f$ is a quasiconformal homeomorphism of $\mathbb C$ that is conformal outside the unit disk $\mathbb D$. The image of a circular segment of the unit circle $\mathbb T$ under $f$ is called a \emph{quasiarc}, and the image of the whole unit circle $\mathbb T$ is called a \emph{quasicircle}. The collection of all quasicircles passing through three fixed points corresponds to universal Teichm\"uller space $T(1)$ and the usual metric is defined in terms of $\Vert \mu \Vert_{\infty}$. A sense-preserving homeomorphism $h$ of $\mathbb T$ is called a \emph{conformal welding} of a Jordan curve $\mathcal{C}$ if $h = \psi^{-1}\circ\varphi |_{\mathbb T} $. Here, $\varphi$ and $\psi$ are conformal maps from $\mathbb D$ onto  $\Omega$, the bounded component of the complement of $\Gamma$, and from $\mathbb D_e$ onto $\Omega_e$, where $\mathbb D_e$ and $\Omega_e$ denote the unbounded components of the complement of $\mathbb T$ and $\mathcal{C}$, respectively. By Carath\'edory extension theorem, both of them can be extended to the boundary homeomorphically. There are many weldings of $\mathcal C$ which differ from each other by compositing with M\"obius transformations of $\mathbb T$. In particular, the conformal welding of a quasicircle is exactly a quasisymmetric homeomorphism of $\mathbb T$. 

\begin{df}
  We say a (sense-preserving or sense-reversing) homeomorphism $h$ of connected closed arcs $I_1 \subset \mathbb T$ onto $I_2 \subset \mathbb T$ is quasisymmetric if there exists a constant $C \geq 1$  such that
\begin{equation}
  C^{-1} \leq \frac{\vert h(x) - h(y)\vert}{\vert h(y) - h(z)\vert} \leq C
\end{equation}
  whenever $x, y, z \in I_1$ in this order with $\vert x - y\vert = \vert y - z\vert$. 
The set of all quasisymmetric homeomorphisms on $I_1$ is denoted by $\textrm{QS}(I_1)$. 
\end{df}

Beurling--Ahlfors \cite{BeurlingAhlfors56} proved that a sense-preserving homeomorphism $h$ of $\mathbb T$ is quasisymmetric if and only if there exists some quasiconformal homeomorphism of $\mathbb D$ onto itself with boundary values $h$. 
 
If one further supposes the complex dilatation $\mu$ of a quasiconformal homeomorphism $f$ is $2$-integrable under the Poincar\'e metric of $\mathbb D$, then the image of $\mathbb T$ is called a \emph{Weil--Petersson curve} (see \cite{Bishop22,CuiIAAhomeomorphism00,TTbook}).
The collection of the Weil--Petersson curves is exactly the closure of smooth curves under the Weil--Petersson metric, and it is exactly the connected component of the identity in $T(1)$ viewed as a complex Hilbert manifold \cite{TTbook}. A Jordan curve is a Weil--Petersson curve if and only if its conformal welding $h$ belongs to the Weil--Petersson class \cite{WPI18}. 
\begin{df}
  We say a (sense-preserving or sense-reversing) homeomorphism $h$ from $I_1 \subset \mathbb T$ onto $I_2 \subset \mathbb T$ belongs to the Weil--Petersson class, denoted by $h \in \mathrm{WP}(I_1)$, if it is absolutely continuous with respect to the arc-length measure such that $\log |h'| \in H^{\frac{1}{2}}(I_1)$.
\end{df}
Here, the Sobolev space $H^{\frac{1}{2}}(I)$ on a connected closed arc $I \subset \mathbb T$ is defined to be the set of all integrable functions $u$ with finite semi-norm 
 \begin{equation}\label{12}
 \|u\|_{H^{\frac{1}{2}}(I)} = \left(\iint_{I\times I} \frac{|u(z_1)-u(z_2)|^2}{|z_1-z_2|^2}\frac{|dz_1|}{2\pi}\frac{|dz_2|}{2\pi}\right)^{\frac{1}{2}}. 
 \end{equation}
Cui \cite{CuiIAAhomeomorphism00} proved that a sense-preserving homeomorphism $h \in \textrm{WP}(\mathbb T)$ if and only if there exists a quasiconformal homeomorphism of $\mathbb D$ onto itself with boundary value $h$, whose complex dilatation $\mu$ is $2$-integrable under the Poincar\'e metric of $\mathbb D$. In particular, the barycentric extension (or called Douady--Earle extension) is a desired extension. 

Concerning the sense-preserving quasisymmetric homeomorphisms of 
$\mathbb T$ and the Sobolev space $H^{\frac{1}{2}}(\mathbb T)$, the following result of Nag--Sullivan \cite{NagSullivanHhalf95} is well-known. 
\begin{prop}\label{pull}
 A sense-preserving homeomorphism $h$ of $\mathbb T$ is quasisymmetric if and only if the composition operator $V_h: g \mapsto g\circ h$ gives an isomorphism of $H^{\frac{1}{2}}(\mathbb T)$. 
\end{prop}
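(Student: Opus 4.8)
The plan is to identify the $H^{\frac12}$ seminorm with a conformally invariant Dirichlet energy, and then to play the quasiconformal distortion of this energy against the Dirichlet principle. First I would record the Douglas formula: writing $\mathcal P u$ for the harmonic (Poisson) extension of $u$ to $\mathbb D$ and $\mathcal D(v) = \iint_{\mathbb D}|\nabla v|^2\,dA$ for the Dirichlet integral, one has
\[
\|u\|_{H^{\frac12}(\mathbb T)}^2 = c\,\mathcal D(\mathcal P u)
\]
for a universal constant $c$. Two consequences are used repeatedly: by the Dirichlet principle $\mathcal D(\mathcal P u)\le \mathcal D(w)$ for every $w$ on $\bar{\mathbb D}$ with boundary trace $u$; and $\mathcal D$ is conformally invariant, so that the infimum of $\mathcal D(w)$ over $w$ with $w|_A\ge 1$, $w|_B\le 0$ defines a conformally invariant condenser capacity $\operatorname{cap}(A,B)$ for disjoint closed arcs $A,B\subset\mathbb T$, comparable to the reciprocal of the extremal length of the arc family joining $A$ to $B$.

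For the direction ``quasisymmetric $\Rightarrow$ isomorphism'' I would use the Beurling--Ahlfors theorem to extend $h$ to a $K$-quasiconformal self-map $F$ of $\mathbb D$. Given $g\in H^{\frac12}(\mathbb T)$, set $u=\mathcal P g$; then $u\circ F$ is a competitor on $\bar{\mathbb D}$ with boundary trace $g\circ h = V_h g$, so by the Dirichlet principle together with the quasiconformal distortion of the Dirichlet integral,
\[
\|V_h g\|_{H^{\frac12}}^2 = c\,\mathcal D(\mathcal P(V_h g)) \le c\,\mathcal D(u\circ F) \le cK\,\mathcal D(u) = K\,\|g\|_{H^{\frac12}}^2 .
\]
Hence $V_h$ is bounded; since $h^{-1}$ is again quasisymmetric and $V_h^{-1}=V_{h^{-1}}$, the same estimate applied to $h^{-1}$ shows $V_h$ is a bounded isomorphism.

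For the converse, both $V_h$ and $V_h^{-1}=V_{h^{-1}}$ are bounded. Let $w$ be the equilibrium potential of $(A,B)$, which is harmonic in $\mathbb D$, so its trace $g=w|_{\mathbb T}$ satisfies $\mathcal P g=w$ and $\|g\|_{H^{\frac12}}^2 = c\,\operatorname{cap}(A,B)$. Then $g\circ h^{-1}$ has trace $\ge 1$ on $h(A)$ and $\le 0$ on $h(B)$, so $\mathcal P(g\circ h^{-1})$ is an admissible competitor and
\[
\operatorname{cap}(h(A),h(B)) \le c^{-1}\|g\circ h^{-1}\|_{H^{\frac12}}^2 \le \|V_{h^{-1}}\|^2\,\operatorname{cap}(A,B).
\]
The symmetric argument, applied to $V_h$ and the equilibrium potential of $(h(A),h(B))$, yields the reverse comparison, so that $C^{-1}\operatorname{cap}(A,B)\le \operatorname{cap}(h(A),h(B))\le C\operatorname{cap}(A,B)$ for all disjoint arc pairs. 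Since quadrilateral moduli, equivalently cross-ratios of four boundary points, are encoded by these capacities, the two-sided comparison says that $h$ distorts cross-ratios boundedly; $h$ is therefore quasi-M\"obius, hence quasisymmetric.

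The analytic heart of the forward direction is the distortion inequality $\mathcal D(u\circ F)\le K\,\mathcal D(u)$, which follows from the chain rule and the dilatation bound on $F$ but is where quasiconformality enters quantitatively. The genuine obstacle is the reverse direction: converting operator boundedness on $H^{\frac12}$ into the capacity comparison, and then invoking the standard but nontrivial equivalence between bounded distortion of arc-pair capacities (quadrilateral moduli) and quasisymmetry of the boundary map. Care is also needed with the mixed Dirichlet--Neumann behaviour of the equilibrium potential on the complementary arcs and with tracking the constant $c$, so that the concluding modulus comparison is genuinely two-sided.
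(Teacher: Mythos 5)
Your proposal addresses a statement that the paper does not prove at all: Proposition \ref{pull} is quoted as a known theorem of Nag--Sullivan \cite{NagSullivanHhalf95}, so there is no internal proof to compare against. Judged on its own merits, your argument is correct and is essentially the classical proof underlying the cited result. The forward direction --- Douglas's identity $\|u\|_{H^{1/2}}^2 = c\,\mathcal D(\mathcal Pu)$, a quasiconformal extension $F$ of $h$ to $\mathbb D$, the distortion bound $\mathcal D(u\circ F)\le K\,\mathcal D(u)$, and the Dirichlet principle, then the same applied to $h^{-1}$ --- is exactly the standard mechanism, and the converse via two-sided comparison of condenser capacities of disjoint boundary arcs, hence bounded distortion of quadrilateral moduli/cross-ratios, hence quasi-M\"obius and therefore quasisymmetric, is the classical Ahlfors--Beurling-type modulus argument. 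Two points you should make explicit if this were written in full: (i) for general $g\in H^{1/2}(\mathbb T)$ one must justify that $u\circ F$ lies in $W^{1,2}(\mathbb D)$ with boundary trace $g\circ h$ (approximation by continuous boundary data, or trace theory for Sobolev functions composed with quasiconformal homeomorphisms of $\overline{\mathbb D}$); and (ii) the passage from quasi-M\"obius to quasisymmetric for a self-homeomorphism of the compact circle requires the V\"ais\"al\"a-type normalization argument (the resulting quasisymmetry constant depends on the map, which is all the proposition asserts). Both are standard, and you flagged them, so I regard the proof as sound.
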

With the above preparation, now we can introduce the following 
\begin{df}\label{disk}
  We say a slit disk $D = \mathbb D\setminus\Gamma$ is a quasislit-disk if there is a quasiconformal homeomorphism $f$ of $\mathbb C$ and $0 < t < 1$ such that $f(D_t) = D$ and $f(0) = 0$. If moreover its complex dilatation $\mu$ is $2$-integrable under the Poincar\'e metric of $D_t$, we say $D$ is a Weil--Petersson quasislit-disk. 
\end{df}
Concerning this definition, we recall two claims from Marshall--Rohde \cite{MarshallRohde05}:
\begin{enumerate}
\item[(i)] The requirement $f(D_t) = D$ implies $f(\mathbb D) = \mathbb D$, and the quasiconformal homeomorphism of $\mathbb D$ can be extended to $\mathbb C$ by  reflection. Therefore, in Definition \ref{disk} it suffices to consider a quasiconformal homeomorphism of $\mathbb D$. 
\item[(ii)] The slit disk $\mathbb D\setminus\Gamma$ is a quasislit-disk if and only if $\Gamma$ is a quasiarc that approaches $\mathbb T$ nontangentially. 
\end{enumerate}
The slit of the Weil--Petersson quasislit-disk appears to be related to the finite energy slit, introduced by Y.Wang \cite{Yilin19JEMS}. If the slit has finite energy then $\Gamma$ is a quasiarc that approaches $\mathbb T$ orthogonally. The converse is not true if $\Gamma$ has some corners or spirals.

Here and in what follows, we assume $\xi(0) = 1$ for simplicity. 
Recall that $\Gamma$ is a simple arc with initial point $a\in \mathbb T$ and tip $b \in \mathbb D$ contained in $\mathbb D\setminus \{0\}$. Suppose $g\colon \mathbb D \to \mathbb D\setminus\Gamma$ is the conformal map with $g(0) = 0$ and $g(1) = b$, and the points $\alpha^{\pm}$ are two preimages of $a$ so that $g^{-1}(\Gamma) = I = I^{-}\cup I^{+}$ where $I^{-} = \langle\alpha^-, 1\rangle$ and $I^{+} = \langle 1, \alpha^{+}\rangle$. The conformal welding $\phi$ interchanges $x$ and $y$ with $g(x) = g(y) \in \Gamma$, as defined in Section 1. 
Our work is motivated by
\begin{theo}[see \cite{LindSharp05,MarshallRohde05}]\label{MRL}
  The slit disk $\mathbb D\setminus\Gamma$ is a quasislit-disk if and only if the sense-reversing homeomorphism $\phi \in \textrm{QS}(I^{+})$ and there exists $M \geq 1$ such that
\begin{equation}
    M^{-1} \leq \frac{\vert\phi(x) - 1\vert}{\vert x - 1\vert} \leq M
\end{equation}
for all $x \in I^{+}$.
    \end{theo}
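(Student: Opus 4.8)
The plan is to compare the given slit with the model radial slit and to transport every piece of information through a single quasisymmetric change of coordinate. Let $g_t\colon\mathbb D\to D_t=\mathbb D\setminus[t,1]$ be the conformal map fixing $0$ and commuting with complex conjugation. Then $g_t^{-1}([t,1])$ is an arc $I_t=I_t^-\cup I_t^+$ symmetric about the real axis; after a harmless convention we may take the tip preimage to be $1$ (so $g_t(1)=t$) and write $\alpha_t^\pm$ for the two preimages of the base, with $I_t^+=\langle 1,\alpha_t^+\rangle$ and $I_t^-=\langle\alpha_t^-,1\rangle$. The reflection symmetry forces the welding of the model slit to be $\phi_t(x)=\bar x$, which is visibly quasisymmetric on $I_t^+$ and satisfies the comparability with constant $1$. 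The whole theorem then rests on the conjugacy relation $\phi=\psi\circ\phi_t\circ\psi^{-1}$ on $I^+$ for a suitable quasisymmetric homeomorphism $\psi$ of $\mathbb T$ fixing $1$ and sending $I_t^\pm$ to $I^\pm$.

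For the forward implication, suppose $D=\mathbb D\setminus\Gamma$ is a quasislit-disk, so there is a quasiconformal $f$ with $f(D_t)=D$, $f(0)=0$, carrying $[t,1]$ onto $\Gamma$. I would set $\psi:=\bigl(g^{-1}\circ f\circ g_t\bigr)|_{\mathbb T}$. Since $g^{-1}\circ f\circ g_t\colon\mathbb D\to\mathbb D$ is quasiconformal (pre- and post-composition by conformal maps does not change the dilatation), Beurling--Ahlfors gives $\psi\in\mathrm{QS}(\mathbb T)$, and chasing the marked points shows $\psi(1)=1$ and $\psi(\alpha_t^\pm)=\alpha^\pm$. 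Because $f$ sends each side of $[t,1]$ to the corresponding side of $\Gamma$, for $\eta\in I_t^+$ the two points $g_t(\eta)$ and $g_t(\phi_t(\eta))$ lie over the same slit point, hence so do $f(g_t(\eta))$ and $f(g_t(\phi_t(\eta)))$; applying $g^{-1}$ yields $\psi(\phi_t(\eta))=\phi(\psi(\eta))$, i.e. the conjugacy above. Thus $\phi=\psi\circ\phi_t\circ\psi^{-1}$ is a composition of quasisymmetric maps and lies in $\mathrm{QS}(I^+)$. The comparability then comes for free: writing $\eta=\psi^{-1}(x)=e^{i\epsilon}$, the triple $\phi_t(\eta)=e^{-i\epsilon}$, $1$, $\eta=e^{i\epsilon}$ is equally spaced about $1$, so the quasisymmetry inequality for $\psi$ applied to this triple reads $M^{-1}\le|\psi(e^{-i\epsilon})-1|/|\psi(e^{i\epsilon})-1|=|\phi(x)-1|/|x-1|\le M$.

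For the converse I would run this construction backwards. Given $\phi\in\mathrm{QS}(I^+)$ with the comparability, choose any quasisymmetric homeomorphism $\psi\colon I_t^+\to I^+$ with $\psi(1)=1$ and $\psi(\alpha_t^+)=\alpha^+$, define $\psi$ on $I_t^-$ by $\psi(w):=\phi\bigl(\psi(\bar w)\bigr)$ (which maps $I_t^-$ onto $I^-$ and is quasisymmetric there, being a composition of conjugation, $\psi|_{I_t^+}$ and $\phi$), and extend $\psi$ to the complementary arc $J_t=\mathbb T\setminus I_t$ by any quasisymmetric map onto $\mathbb T\setminus I$ matching the endpoints. Once $\psi\in\mathrm{QS}(\mathbb T)$ is in hand, I extend it to a quasiconformal self-map $\Psi$ of $\mathbb D$ with $\Psi(0)=0$ (Beurling--Ahlfors, normalized at the origin) and set $f:=g\circ\Psi\circ g_t^{-1}$. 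This $f$ is quasiconformal with $f(0)=0$ and $f(D_t)=\mathbb D\setminus\Gamma$, and the conjugacy guarantees it sends the two sides of $[t,1]$ consistently to the two sides of $\Gamma$, so $f$ exhibits $\mathbb D\setminus\Gamma$ as a quasislit-disk by definition.

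The main obstacle is the single gluing step in the converse: verifying that the piecewise definition of $\psi$ is globally quasisymmetric across the tip preimage $1$. On each of $I_t^\pm$ quasisymmetry is built in, and the matching of the two halves at $1$ is controlled precisely by the behavior of $|\psi(w)-1|$ from the two sides for $|w-1|=|\bar w-1|$; by construction this ratio equals $|\phi(x)-1|/|x-1|$ with $x=\psi(\bar w)$, so the comparability hypothesis is exactly the quantitative condition needed to fuse the two quasisymmetric halves into a single quasisymmetric map. The analogous matching at the base preimages $\alpha^\pm$ is governed by $\phi\in\mathrm{QS}(I^+)$ together with the freedom in choosing $\psi$ on $J_t$. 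This also explains structurally why both hypotheses appear and why neither can be dropped: quasisymmetry of $\phi$ encodes the quasiarc property away from the tip, while the comparability is the compatibility condition forcing the two sides to meet the tip at comparable rates.
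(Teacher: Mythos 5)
The paper itself does not prove Theorem \ref{MRL}: it is quoted from \cite{LindSharp05,MarshallRohde05}, so the relevant internal comparison is with the proof of Theorem \ref{main}, its Weil--Petersson analogue, and your proposal follows essentially that blueprint. Your forward direction is the paper's necessity argument, transposed from WP to QS: the boundary restriction $\psi$ of the quasiconformal self-map $g^{-1}\circ f\circ g_t$ of $\mathbb D$ is quasisymmetric, the welding is the conjugate $\phi=\psi\circ\iota\circ\psi^{-1}$ of the model welding $\iota(z)=\bar z$, and the comparability falls out of the quasisymmetry inequality for $\psi$ applied to the symmetric triples $e^{-i\epsilon},1,e^{i\epsilon}$; this part is correct and complete. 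Your converse is the paper's sufficiency scheme: manufacture a sense-preserving circle homeomorphism $\psi$ out of $\phi$ by reflection, extend it quasiconformally to $\mathbb D$, conjugate by conformal maps onto $D_t$ and $\mathbb D\setminus\Gamma$, and use the welding relation to see that $f$ matches up across the slit. (The paper streamlines your ``free choices'' by taking the half of $\psi$ you choose freely to be the M\"obius map $\tau$ and extending to the left half-circle by Schwarz reflection, which makes the matching at the base points automatic by symmetry.)

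The genuine gap is the crux of the converse: the claim that your piecewise-defined $\psi$ is globally quasisymmetric is asserted, not proved. You correctly identify the comparability hypothesis as the compatibility condition at the junction point $1$, but converting that into quasisymmetry of the glued map is the entire technical content of this direction: one must handle equally spaced triples straddling the junction, which requires either the weak-quasisymmetry-implies-quasisymmetry theorem of Tukia--V\"ais\"al\"a together with a case analysis, or the direct quantitative estimates carried out in \cite{LindSharp05,MarshallRohde05}. This is exactly the step that, in the paper's WP analogue, is isolated as Proposition \ref{psi}, where the cross term $J_5$ is the one that forces the extra hypothesis \eqref{extra}; your one-sentence dismissal (``the comparability is exactly the quantitative condition needed to fuse the two halves'') names the right mechanism but hides all of the work. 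Two smaller omissions in the same direction: (i) Definition \ref{disk} demands $f$ quasiconformal on all of $\mathbb C$ (equivalently $\mathbb D$), so after checking that $f$ is continuous across $(t,1)$ you must invoke the removability/extension theorem for quasiconformal maps across an arc, which the paper cites explicitly (Lehto--Virtanen, Hubbard); and (ii) the normalization $f(0)=0$ is not delivered by Beurling--Ahlfors --- one must post-compose with a quasiconformal self-map of $\mathbb D$ equal to the identity on $\mathbb T$ that moves the image of $0$ to the right place, which is precisely the role of the paper's Lemma \ref{Ma}.
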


Let $\tau$ be the unique M\"obius transformation of $\mathbb T$ that sends $-i, 1, i$ to $\alpha^{-}, 1, \alpha^{+}$, respectively. The main result of this paper is 
\begin{theo}\label{main}
    The slit disk $\mathbb D\setminus\Gamma$ is a Weil--Petersson quasislit-disk if and only if the sense-reversing homeomorphism $\phi \in \textrm{WP}(I^{+})$ and 
 \begin{equation}\label{extra}
 \int_{\langle 1,i\rangle }\int_{\langle -i,1\rangle} \frac{\log^2|(\tau^{-1}\circ \phi\circ \tau)'(z_1)|}{|z_1-z_2|^2} |dz_1||dz_2| < \infty.
 \end{equation}
\end{theo}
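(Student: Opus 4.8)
The plan is to compare the slit $\Gamma$ with the standard slit $[t,1]$ and to transport the Weil--Petersson condition to the unit circle, where it becomes a statement about $H^{\frac12}(\mathbb{T})$. By claims (i), (ii) and Definition \ref{disk}, $\mathbb{D}\setminus\Gamma$ is a Weil--Petersson quasislit-disk exactly when there is a quasiconformal self-map $f$ of $\mathbb{D}$ with $f(0)=0$, conformal outside $\overline{\mathbb{D}}$ after reflection, carrying $[t,1]$ onto $\Gamma$ and with complex dilatation $2$-integrable under the Poincar\'e metric. Writing $g_0\colon\mathbb{D}\to D_t$ and $g\colon\mathbb{D}\to\mathbb{D}\setminus\Gamma$ for the normalized conformal maps and setting $h=g^{-1}\circ f\circ g_0$, I would note that $h$ is a quasiconformal self-map of $\mathbb{D}$ whose dilatation is again $2$-integrable under the Poincar\'e metric, since $g,g_0$ are conformal and this integrability is conformally invariant. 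By Cui's theorem \cite{CuiIAAhomeomorphism00}, the Weil--Petersson quasislit condition is then equivalent to $h|_{\mathbb{T}}\in\mathrm{WP}(\mathbb{T})$.

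Next I would record the welding of the standard slit and the resulting conjugation identity. Since $[t,1]$ and $D_t$ are symmetric under $z\mapsto\bar z$, the map $g_0$ may be taken real-symmetric, so $g_0^{-1}([t,1])$ is a pair of arcs interchanged by conjugation and the standard welding is $\phi_0=c$, where $c(z)=\bar z$. Following the two sides of the slit through $f$ shows that $h$ respects the splitting of $\mathbb{T}$ into the slit-arcs and their complement and that the two weldings are conjugate, $\phi=h\circ c\circ h^{-1}$ on $I^{+}$. Writing $v=\log|h'|$ for the boundary log-derivative and $w=h^{-1}(z_1)$, I would obtain
\begin{equation}\label{conj-id}
\log|\phi'(z_1)|=v(\bar w)-v(w),
\end{equation}
so that, transported by $h$, $\log|\phi'|$ is precisely the antisymmetric part of $v$ across the symmetry circle. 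After normalizing the three marked points by $\tau$ (which sends $\langle 1,i\rangle$ to $I^{+}$ and $\langle -i,1\rangle$ to $I^{-}$) and passing to $\psi=\tau^{-1}\circ\phi\circ\tau$, identity \eqref{conj-id} reads as the jump of $v$ between the two quarter-circles, which both collapse to the tip-corner $z=1$. The whole theorem then reduces to showing that the existence of some $h\in\mathrm{WP}(\mathbb{T})$ conjugating $\phi$ to $c$ is equivalent to $\phi\in\mathrm{WP}(I^{+})$ together with \eqref{extra}.

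For the forward direction I would suppose $v=\log|h'|\in H^{\frac12}(\mathbb{T})$. Restricting to a quarter-circle and using that $h$ there is quasisymmetric, the arc form of Proposition \ref{pull} gives $\log|\psi'|\in H^{\frac12}(\langle 1,i\rangle)$, which, since $\tau$ is M\"obius, is equivalent to $\phi\in\mathrm{WP}(I^{+})$. For \eqref{extra}, identity \eqref{conj-id} exhibits $\log|\psi'|$ near the corner $z=1$ as a difference of values of $v$ at conjugate points both tending to $1$; the $H^{\frac12}$-regularity of $v$ across the symmetry circle then forces exactly the weighted integrability near the tip (with weight comparable to $1/|z-1|$, the size of $\int_{\langle -i,1\rangle}|z_1-z_2|^{-2}|dz_2|$) that \eqref{extra} encodes. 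Thus $\phi\in\mathrm{WP}(I^{+})$ and \eqref{extra} both hold.

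The reverse direction carries the main difficulty. Assuming $\phi\in\mathrm{WP}(I^{+})$ and \eqref{extra}, and using $\mathrm{WP}(I^{+})\subset\mathrm{QS}(I^{+})$ together with the Lipschitz bound of Theorem \ref{MRL} (which is forced by the square-root behavior of $g$ at the tip, giving $|\phi(x)-1|\asymp|x-1|$), $\mathbb{D}\setminus\Gamma$ is already a quasislit-disk and a quasisymmetric $h$ with $\phi=hch^{-1}$ exists. To upgrade $h$ to the Weil--Petersson class I would reconstruct its boundary log-derivative directly: by \eqref{conj-id} the conjugacy fixes only the antisymmetric part of $v$, while the symmetric part is free, corresponding to right-composing $h$ with a homeomorphism commuting with $c$. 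Prescribing the symmetric part to be smooth (say zero) and taking $v$ to be the odd extension dictated by \eqref{conj-id}, I expect $\phi\in\mathrm{WP}(I^{+})$ to make the within-arc part of $\|v\|_{H^{\frac12}(\mathbb{T})}$ finite and \eqref{extra} to make the across-tip part finite, so that $v\in H^{\frac12}(\mathbb{T})$ and hence $h\in\mathrm{WP}(\mathbb{T})$; Cui's barycentric extension then yields a Weil--Petersson quasiconformal $h$, and unwinding $f=g\circ h\circ g_0^{-1}$ exhibits $\mathbb{D}\setminus\Gamma$ as a Weil--Petersson quasislit-disk. The hard part will be this last construction: proving rigorously that the odd extension prescribed by $\phi$ and controlled by \eqref{extra} lies in $H^{\frac12}(\mathbb{T})$ (justifying the split into within-arc and across-tip contributions, and the harmless behavior near the separated endpoints $\alpha^{\pm}$), that the symmetric part can indeed be chosen freely while keeping $h$ a conjugating homeomorphism, and that the tip growth bound needed to invoke Theorem \ref{MRL} follows from the Weil--Petersson hypotheses.
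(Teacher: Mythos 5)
Your overall skeleton is genuinely close to the paper's: both proofs run through the conjugation identity $\phi=\psi\circ\iota\circ\psi^{-1}$ (with $\iota(z)=\bar z$), Cui's theorem identifying $2$-integrable dilatations with $\mathrm{WP}(\mathbb T)$ boundary values, and the barycentric extension. Your necessity direction is essentially the paper's, except at its last step: your claim that the $H^{\frac12}$-regularity of $v=\log|h'|$ ``forces exactly'' the weighted integrability \eqref{extra} is left heuristic. The paper makes this rigorous by an explicit reflection construction: it extends $\psi^{-1}\circ\tau$ from $\langle 1,i\rangle$ to a map $\Psi\in\mathrm{WP}(\mathbb T)$, uses the group structure of $\mathrm{WP}(\mathbb T)$ to produce a Weil--Petersson homeomorphism equal to $\iota\circ\tau^{-1}\circ\phi\circ\tau$ on $\langle 1,i\rangle$ and to the identity on $\langle -i,1\rangle$, and then reads off \eqref{extra} as the cross-term of its $H^{\frac12}$ seminorm (the ``$0$ on one arc'' structure is what turns the difference quotient into the one-sided expression $\log^2|(\tau^{-1}\circ\phi\circ\tau)'(z_1)|$).

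The sufficiency direction, which you correctly identify as the hard part, has two genuine gaps beyond the admitted one. First, your normalization ``symmetric part of $v$ equal to zero'' is not a construction: the identity $\log|\phi'(z_1)|=v(\bar w)-v(w)$ has $w=h^{-1}(z_1)$, so prescribing oddness means $v(w)=-\tfrac12\log|\phi'(h(w))|$ with $h$ itself reconstructed from $v$ --- a fixed-point problem you do not solve; worse, if you also insist (as your use of $\tau$ suggests) that $h$ maps $\langle 1,i\rangle$ onto $I^{+}$, oddness forces $|h'|=|\phi'\circ h|^{-1/2}$ there, hence $\int_{I^{+}}|\phi'|^{1/2}|d\sigma|=\pi/2$, a length constraint that generically fails. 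The paper sidesteps all of this by taking the ``symmetric part'' to be M\"obius rather than zero: it defines $\psi$ explicitly and piecewise (the identity on $\langle 1,-1\rangle$, $\tau^{-1}\circ\phi\circ\tau(\bar z)$ on $\langle -i,1\rangle$, and its reflection on $\langle -1,-i\rangle$), so that membership in $\mathrm{WP}(\mathbb T)$ becomes the concrete estimate of Proposition \ref{psi}, whose cross-term $J_5$ is exactly hypothesis \eqref{extra}, with Lemma \ref{extension} handling the reflected pieces. Second, you never arrange $f(0)=0$, which Definition \ref{disk} requires: the barycentric extension gives no control over interior points, and the conformal identification of $\mathbb D$ with $D_t$ compatible with the welding symmetry forces the preimage of $0$ to lie on the real axis (this also determines $t$, which in the sufficiency direction is not given but must be produced). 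This is precisely the role of the auxiliary quasiconformal map $q$ of Lemma \ref{Ma} --- identity on $r\le|z|<1$, so its dilatation is supported where $\rho_{\mathbb D}$ is bounded and the $2$-integrability survives. Finally, your ``unwinding $f=g\circ h\circ g_0^{-1}$'' silently uses the extension theorem for quasiconformal maps across the slit (the paper cites Lehto--Virtanen), which is what upgrades the boundary welding match to quasiconformality of $f$ on all of $\mathbb D$; this step should be stated, since without it $f$ is only known to be quasiconformal off $[t,1]$.
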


Recall that in \cite{LindSharp05,MarshallRohde05} the slit $\Gamma$ of quasislit-disk was characterized in terms of the driving term $\xi$ by borrowing Theorem \ref{MRL} as a transition. Differently from that, in chordal case, it was shown that the slit $\Gamma$ with finite Loewner energy $I^{C}(\Gamma):=\frac{1}{2} \int_0^T\xi'(t)^2dt < \infty$ possesses various regular properties, without involving $\phi$ at all in the proof \cite{Yilin19JEMS}. When generalized to the loop case, these slits precisely correspond to the Weil--Petersson curves. We hope that Theorem \ref{main} could find applications to some other problems in Loewner theory, and also in the Weil--Petersson geometry.

\section{Proof of Theorem \ref{main}}
In preparation for the proof, we need the following crucial lemmas.

Recall an integrable function $u$ on a connected closed arc $I \subset \mathbb T$ is of $\textrm{BMO}(I)$ if 
\[
\Vert u \Vert_{\textrm{BMO}(I)} = \sup_{J \subset I} \frac{1}{|J|}\int_J |u(z) - u_J| |dz| < \infty,
\]
where the supremum is taken over all subarcs $J$ on $I$ and $u_J$ denotes the integral mean of $u$ over $J$. It is said that $u \in \textrm{VMO}(I)$ if moreover
\[
\lim_{|J|\to0} \frac{1}{|J|}\int_J |u(z) - u_J| |dz| = 0. 
\]
\begin{lem}\label{H}
Let $I_1$ and $I_2$ be connected closed arcs on $\mathbb T$. Let $\varphi$ be a homeomorphism from $I_1$ onto $I_2$. If $\varphi \in \text{\rm WP}(I_1)$, then $\varphi\in \text{\rm QS}(I_1)$.
\end{lem}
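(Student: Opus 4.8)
The plan is to reduce the quasisymmetry inequality to a comparison of the weight $w := |\varphi'|$ over adjacent arcs of equal length, and then to extract that comparison from the $H^{\frac12}$-hypothesis via the embedding $H^{\frac12}(I_1)\subset\mathrm{VMO}(I_1)$ together with the John--Nirenberg inequality. Since $\varphi\in\mathrm{WP}(I_1)$ is absolutely continuous, for $x,y\in I_1$ one has $|\varphi(x)-\varphi(y)|\asymp\int_{\langle y,x\rangle}w\,|dz|$, the chordal distance on the image arc being comparable to arc length. Writing $\langle z,y\rangle$ and $\langle y,x\rangle$ for two adjacent subarcs of equal length, the target estimate follows once I show $\int_{\langle y,x\rangle}w\asymp\int_{\langle z,y\rangle}w$ with a constant independent of the triple. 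Throughout I set $u:=\log w=\log|\varphi'|\in H^{\frac12}(I_1)$.

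The first step is to record the local smallness of the mean oscillation of $u$. For any subarc $J$, the Cauchy--Schwarz inequality and the bound $|z_1-z_2|\le|J|$ on $J\times J$ give
\[
\|u\|_{\mathrm{BMO}(J)}\le\Big(\tfrac12\iint_{J\times J}\frac{|u(z_1)-u(z_2)|^2}{|z_1-z_2|^2}\,|dz_1|\,|dz_2|\Big)^{1/2}.
\]
Because the full double integral over $I_1\times I_1$ is finite while $J\times J\subset\{|z_1-z_2|\le|J|\}$ shrinks to the diagonal, the right-hand side tends to $0$ uniformly in the position of $J$ as $|J|\to0$; this is precisely $H^{\frac12}(I_1)\subset\mathrm{VMO}(I_1)$, and it yields $\delta>0$ and a small $\varepsilon_0$ with $\|u\|_{\mathrm{BMO}(J)}\le\varepsilon_0$ whenever $|J|\le\delta$.

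On arcs of length $\le\delta$ I would then run the standard John--Nirenberg estimate. Writing $w=e^{u_J}e^{u-u_J}$ on $J=\langle z,x\rangle$, exponential integrability bounds $\int_{\langle y,x\rangle}e^{u-u_J}\le\int_J e^{u-u_J}\le C|J|$, while Jensen's inequality together with $|u_{\langle z,y\rangle}-u_J|\le2\varepsilon_0$ gives $\int_{\langle z,y\rangle}e^{u-u_J}\ge e^{-2\varepsilon_0}|\langle z,y\rangle|$. The factor $e^{u_J}$ cancels, producing $\int_{\langle y,x\rangle}w\asymp\int_{\langle z,y\rangle}w$ with a \emph{universal} constant for all adjacent equal-length pairs of length $\le\delta$.

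The main obstacle is to upgrade this to all scales, since the quasisymmetry bound must hold for triples of size up to $|I_1|$. Here I would use a Harnack-type chaining: partition $\langle z,y\rangle$ and $\langle y,x\rangle$ into consecutive subarcs of a fixed length $\le\delta$, whose total number is bounded by a fixed quantity $N_0\sim|I_1|/\delta$ precisely because $I_1$ has finite length. The small-scale comparison applies to each adjacent pair, and concatenating at most $2N_0$ steps gives a bound depending only on $N_0$ and the universal constant. This furnishes the uniform comparison $\int_{\langle y,x\rangle}w\asymp\int_{\langle z,y\rangle}w$ at every scale, hence $\varphi\in\mathrm{QS}(I_1)$. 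The delicate points to monitor are the uniformity \emph{in position} of the VMO smallness and the fact that the number of chaining steps stays bounded, which is exactly where the finiteness of $I_1$ is used.
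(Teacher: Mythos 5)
Your proof is correct, and its first half coincides with the paper's: both establish $H^{\frac12}(I_1)\subset\mathrm{VMO}(I_1)$ by exactly the same Cauchy--Schwarz computation, with the smallness uniform in the position of $J$ because $J\times J$ sits inside the strip $\{|z_1-z_2|\le |J|\}$ and the integral is absolutely continuous. The genuine difference is in the second half. The paper stops at VMO and invokes Partyka's theorem (citing also Shen for an alternative proof) as a black box: $\log|\varphi'|\in\mathrm{VMO}(I_1)$ implies $\varphi\in\mathrm{QS}(I_1)$. You instead prove that implication directly: John--Nirenberg yields exponential integrability of $u-u_J$ on arcs of length $\le\delta$ where the BMO norm is below a small $\varepsilon_0$, Jensen gives the matching lower bound on half-arcs, and a chaining argument with at most $N_0\sim |I_1|/\delta$ steps propagates the comparison $\int_{\langle y,x\rangle}|\varphi'|\asymp\int_{\langle z,y\rangle}|\varphi'|$ from scale $\delta$ up to scale $|I_1|$; in substance you are showing $|\varphi'|$ is a doubling ($A_\infty$-type) weight, which is precisely what underlies Partyka's result. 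The paper's route buys brevity and a clean separation between the functional-analytic step and the cited harmonic-analysis step; your route buys a self-contained quantitative argument that makes visible where smallness of the oscillation is needed (the John--Nirenberg exponent must beat the exponential, so $\varepsilon_0$ must be small) and where finiteness of $I_1$ is used (the bounded number of chaining steps). Two details to pin down in a final write-up: the reduction of the chordal quasisymmetry inequality to arc-length comparisons uses that $I_1$ and $I_2$ are \emph{proper} closed arcs --- chord and arc length are comparable only for arcs of length bounded away from $2\pi$, and equal chords force equal arc lengths only because the two domain arcs are disjoint subarcs of $I_1$ --- and in the chaining step the partition pieces should have length $\le\delta/2$, so that the union of each adjacent pair is an arc of length $\le\delta$ to which the small-scale estimate applies.
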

\begin{proof}
The fact $H^{\frac{1}{2}}(I_1)\subset\text{VMO}(I_1)$ has been asserted in \cite{WPIII21, WeiPWP22}. For convenience, we give a proof here. Set $u = \log |\varphi'|$ for simplicity. Let $I \subset I_1$ be any connected arc. Then we have 
\begin{align*}
 \frac{1}{|I|}\int_I |u(z) - u_I| |dz| &= \frac{1}{|I|}\int_I \Big|u(z) - \frac{1}{|I|}\int_I u(w) |dw| \Big| |dz|\\ &\leq \frac{1}{|I|^2} \int_I\int_I |u(z) - u(w)| |dz||dw|\\
 & \leq\Bigg(\frac{1}{|I|^2}\int_I\int_I|u(z) - u(w)|^2 |dz||dw|\Bigg)^{1/2}\\
 &\leq \Bigg(\int_I\int_I\frac{|u(z) - u(w)|^2}{|z-w|^2} |dz||dw|\Bigg)^{1/2}.
\end{align*}
This implies that $\Vert u \Vert_{\text{BMO}(I_1)}\leq \Vert u \Vert_{H^{\frac{1}{2}}(I_1)}$. If $u \in H^{\frac{1}{2}}(I_1)$, then $|u(z) - u(w)|^2/|z-w|^2$ is integrable on $I_1\times I_1$. Hence, for any $\varepsilon > 0$, there is $\delta > 0$ such that if $|I| < \delta$ then its integral over $I\times I$ is less than $\varepsilon$. This shows that $u \in \text{VMO}(I_1)$. Partyka asserted that $u \in \text{VMO}(I_1)$ implies $\varphi \in \text{QS}(I_1)$ \cite{PartykaEigenvalues98} (see also \cite{WPI18} for a different proof). This completes the proof. 
\end{proof}

Let $I_j = \left\langle a_j, b_j\right\rangle$ be connected closed arcs on $\mathbb T$, where $j = 1, 2$. Let $\tau_j$ be the M\"obius transformations sending $a_j, b_j$ to $-i, i$ respectively. Let $\varphi$ be a sense-preserving homeomorphism from $I_1$ onto $I_2$. We define $\psi = \tau_2 \circ \varphi \circ \tau_1^{-1}$ on $\left\langle -i, i \right\rangle$, and extend it to the left half circle $\left\langle i, -i \right\rangle$ by reflection, and denote it by
 \begin{equation}
 \notag \hat \psi(z)= 
 \begin{cases}
 \psi(z), & z \in \left\langle -i,i \right\rangle,\\
 -\overline{\psi(-\bar{z})}, & z \in \left\langle i, -i \right\rangle.
 \end{cases}
 \end{equation}
Set $\hat\varphi = \tau_2^{-1}\circ\hat \psi \circ \tau_1$ on $\mathbb T$. We see that $\hat\varphi|_{I_1} = \varphi$. Then we have the following results.

\begin{lem}\label{extension}
With the above notations, the following two statements hold.
\begin{enumerate}
 \item[{\rm(1)}] If $\varphi\in \text{\rm QS}(I_1)$, then $\hat \varphi \in \text{\rm QS}(\mathbb T)$.
 \item[{\rm(2)}] If $\varphi\in \text{\rm WP}(I_1)$, then $\hat \varphi \in \text{\rm WP}(\mathbb T)$.
\end{enumerate}
\end{lem}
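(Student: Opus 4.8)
The plan is to strip off the M\"obius factors and then analyze the reflection at the two seam points $\pm i$. Since $\tau_1,\tau_2$ are M\"obius transformations of $\mathbb T$, and both $\mathrm{QS}$ and $\mathrm{WP}$ are preserved under pre- and post-composition with M\"obius transformations---for $\mathrm{QS}$ because such maps are bi-Lipschitz and $\mathrm{QS}$ is closed under composition, and for $\mathrm{WP}$ by the chain rule together with Proposition \ref{pull} and its arc analogue (the relevant factors $\log|\tau_j'|$ being smooth)---it suffices to prove the two implications for $\psi$ and its reflected extension $\hat\psi$: that $\psi\in\mathrm{QS}(J)$ forces $\hat\psi\in\mathrm{QS}(\mathbb T)$, and that $\psi\in\mathrm{WP}(J)$ forces $\hat\psi\in\mathrm{WP}(\mathbb T)$, where $J=\langle -i,i\rangle$ is the right half circle. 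Writing $\rho(z)=-\bar z$ for the reflection in the imaginary axis, I first record that $\rho$ is an isometry of $\mathbb T$ fixing $\pm i$, that $\hat\psi$ is continuous at $\pm i$ and equals $\rho\circ\psi\circ\rho$ on the complementary arc, and hence that $\hat\psi$ is $\rho$-equivariant, $\hat\psi\circ\rho=\rho\circ\hat\psi$.

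For part (2), set $v=\log|\psi'|\in H^{\frac12}(J)$. Because $\rho$ is an isometry and $\hat\psi=\rho\circ\psi\circ\rho$ on the left half circle, one has $|\hat\psi'|=|\psi'|\circ\rho$ there, so that $u:=\log|\hat\psi'|$ is precisely the \emph{even} reflection of $v$ across $\pm i$. I would then split the double integral defining $\|u\|_{H^{\frac12}(\mathbb T)}$ in \eqref{12} into three pieces, according to whether the two integration variables both lie in the right half circle, both lie in the left half circle, or lie one in each. The first two pieces each equal $\|v\|_{H^{\frac12}(J)}^2$, since $\rho$ is an isometry and $u\circ\rho=u$. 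After the substitution $z_2\mapsto\rho(z_2)$ the cross piece becomes a constant multiple of $\iint_{J\times J}|v(z_1)-v(w)|^2/|z_1-\rho(w)|^2\,|dz_1||dw|$, which is dominated by $\|v\|_{H^{\frac12}(J)}^2$ as soon as one has the chordal comparison $|z_1-\rho(w)|\ge c\,|z_1-w|$ for $z_1,w\in J$ and some $c>0$. As $u$ is integrable and $\hat\psi$ is absolutely continuous (being absolutely continuous on each half circle and continuous at $\pm i$), this gives $\hat\psi\in\mathrm{WP}(\mathbb T)$.

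For part (1), I would apply a M\"obius transformation carrying $i$ to $0$, $-i$ to $\infty$, and $J$ onto the positive half line; under it $\hat\psi$ corresponds to the \emph{odd} extension $\hat\sigma(t)=-\sigma(-t)$ of the quasisymmetric self-map $\sigma$ of $[0,\infty)$ (fixing $0$ and $\infty$) induced by $\psi$. The claim then reduces to the assertion that the odd extension of such a $\sigma$ is quasisymmetric on $\mathbb R$, which I would check against the defining three-point inequality for quasisymmetry: triples lying in a single half line satisfy it by hypothesis together with oddness, while for triples straddling the origin the quotient is controlled by combining oddness with the quasisymmetry of $\sigma$ near the endpoint.

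In both parts the only real difficulty is concentrated at the seam points $\pm i$; away from them the implications are immediate from the regularity of $\psi$ and the fact that $\rho$ is an isometry. For part (2) the crux is the chordal estimate $|z_1-\rho(w)|\ge c|z_1-w|$, which is exactly what keeps the even reflection from producing a logarithmic divergence of the $H^{\frac12}$ integral at a seam; here the \emph{even} parity of $\log|\psi'|$ is essential, since an odd reflection would in general introduce a jump and leave $H^{\frac12}$. I expect to prove this estimate by the elementary identity $|e^{i\alpha}-e^{i\beta}|=2\bigl|\sin\tfrac{\alpha-\beta}{2}\bigr|$, noting that the only delicate regime---both points near one seam---is harmless because there $|z_1-\rho(w)|$ already dominates $|z_1-w|$. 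For part (1) the corresponding crux is the straddling-triple estimate for the odd extension at the origin.
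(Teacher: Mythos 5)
Your proposal is correct, and on part (2) it is essentially the paper's own argument: reduce to $\psi$ by M\"obius invariance, note that $\log|\hat\psi'|$ is the even reflection of $\log|\psi'|$, split the $H^{\frac12}$ double integral into two diagonal pieces (equal, since $\rho(z)=-\bar z$ is an isometry) plus a cross piece, and control the cross piece by the chordal comparison $|z_1-\rho(w)|\ge c\,|z_1-w|$ on the right half circle --- the paper does exactly this, with $c=1$, via the inequality $|-\bar z_1-z_2|\ge|z_1-z_2|$; even your reduction step (``Proposition \ref{pull} and its arc analogue'') matches the paper, which treats $\log|\tau_2'|\circ\varphi\circ\tau_1^{-1}$ by extending to $\mathbb T$ with part (1), Lemma \ref{H} and Nag--Sullivan, and treats $\log|\varphi'|\circ\tau_1^{-1}$ by exact conformal invariance of the kernel. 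Where you genuinely diverge is part (1): the paper extends $\psi$ quasiconformally to the right half-disk by the Jerison--Kenig construction, reflects that quasiconformal map across the imaginary axis, and reads off $\hat\psi\in\mathrm{QS}(\mathbb T)$ from Beurling--Ahlfors, whereas you conjugate by a M\"obius map sending $\langle -i,i\rangle$ to $[0,\infty]$ and verify directly that the odd extension $\hat\sigma(t)=-\sigma(-t)$ of a quasisymmetric self-map of $[0,\infty)$ fixing $0$ and $\infty$ satisfies the three-point condition on $\mathbb R$. That route is sound (the straddling-triple case does close: for $0\le x<t$ one compares $\sigma(x+t)-\sigma(x)$ with $\sigma(x)+\sigma(t-x)$ using the doubling and lower-ratio bounds that quasisymmetry gives on the half-line) and it is more elementary, avoiding any quasiconformal extension theorem; what it costs is care at the conjugation itself: the M\"obius map $T$ with $T(-i)=\infty$ is \emph{not} bi-Lipschitz, so transferring the chordal three-point condition on the arc to the Euclidean one on $[0,\infty)$, and transferring quasisymmetry of $\hat\sigma$ on $\mathbb R$ (fixing $\infty$) back to chordal quasisymmetry of $\hat\psi$ on $\mathbb T$, are not covered by the ``bi-Lipschitz'' remark you make for $\tau_1,\tau_2$; both steps are standard (quasi-M\"obius invariance, or Beurling--Ahlfors on $\mathbb H$ conjugated to $\mathbb D$) but should be stated and cited. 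In short: part (2) coincides with the paper; part (1) is a correct, more hands-on alternative to the paper's extension-and-reflection argument, provided you make the two conjugation steps precise.
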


\begin{proof}
Since $\tau_1, \tau_2$ are M\"obius transformations on $\mathbb T$ and hence bi-Lipschitz, an elementary computation shows, by the definition of quasisymmetry, that $\psi \in \text{QS}(\left\langle -i, i \right\rangle)$. By the construction of Kenig and Jerison \cite{JerisonKenigHardyAinfty82} (see also the proof of Theorem 5.8.1 in \cite{BookAstalaIwaniecMartin09}), there exists a quasiconformal self-mapping $\tilde{\psi}$ of the right semidisk such that $\tilde{\psi}|_{\left\langle -i, i \right\rangle}=\psi$. Consider the reflection of $\tilde{\psi}$ with respect to the imaginary line, still denoted by $\tilde{\psi}$. We obtain that $\tilde{\psi}$ is a quasiconformal self-mapping of $\mathbb{D}$ and $\tilde{\psi}|_{\mathbb T}=\hat{\psi}$ (e.g. Chapter 5.9 of \cite{BookAstalaIwaniecMartin09}), which yields that $\hat \psi\in \text{QS}(\mathbb T)$, and thus $\hat \varphi \in \text{QS}(\mathbb T)$. This gives a proof of part (1). 

We now prove part (2). It is easy to see that $\hat{\psi}$ is absolutely continuous on $\mathbb T$. Now we shall prove that $\log |\hat{\psi}'|\in H^{\frac{1}{2}}(\mathbb{T})$. Note that when $z \in \left\langle -i, i\right\rangle$,
\begin{equation}
 \log|\psi'(z)| = \log|\tau_2'|\circ\varphi\circ \tau_1^{-1}(z) + \log|\varphi'|\circ \tau_1^{-1}(z) + \log|(\tau_1^{-1})'(z)|.
\end{equation}
It is obvious to see that both $\log|\tau_2'|$ and $\log|(\tau_1^{-1})'|$ are in $H^{\frac{1}{2}}(\mathbb{T})$, and $\log|\tau_2'|\circ\varphi\circ \tau_1^{-1} \in H^{\frac{1}{2}}(\mathbb{T})$ using Proposition \ref{pull}, Lemma \ref{H} and part (1) of  Lemma \ref{extension}. The automorphism $\tau_1$ of $\mathbb D$ preserves the hyperbolic distance, this implies 
\[
|\tau_1(z_1) - \tau_1(z_2)|^2 = |z_1 - z_2|^2 |\tau_1'(z_1)| |\tau_1'(z_2)|,
\]
and we thus conclude by $\log|\varphi'| \in H^{\frac{1}{2}}(I_1)$ that 
\begin{align*}
 &\int_{\left\langle -i,i\right\rangle}\int_{\left\langle -i,i\right\rangle} \frac{|\log|\varphi'|\circ \tau_1^{-1}(z_1) - \log|\varphi'|\circ \tau_1^{-1}(z_2)|^2}{|z_1 - z_2|^2}|dz_1||dz_2|\\ 
 =& \int_{I_1}\int_{I_1} \frac{\left|\log|\varphi'(w_1)| - \log|\varphi'(w_2)|\right|^2}{|w_1 - w_2|^2} \frac{|w_1 - w_2|^2|\tau_1'(w_1)| |\tau_1'(w_2)|}{|\tau_1(w_1) - \tau_1(w_2)|^2} |dw_1||dw_2|\\
 =& \int_{I_1}\int_{I_1} \frac{\left|\log|\varphi'(w_1)| - \log|\varphi'(w_2)|\right|^2}{|w_1 - w_2|^2} |dw_1||dw_2| < \infty.
\end{align*}
Consequently, we obtain that $\log|\psi'| \in H^{\frac{1}{2}}(\left\langle -i,i\right\rangle)$. Notice that 
\begin{align}\label{dc}
 |\hat{\psi}'(z)| =|\psi'(-\bar z)|, \qquad z \in \left\langle i,-i\right\rangle.
\end{align}
By \eqref{dc} we have that $\log |\hat{\psi}'|\in H^{\frac{1}{2}}(\left\langle i,-i\right\rangle)$. Moreover,
\begin{align*}
 & \int_{\left\langle i,-i\right\rangle}\int_{\left\langle -i, i\right\rangle} 
\frac{|\log|\hat{\psi}'(z_1)| - \log|\hat{\psi}'(z_2)||^2}{|z_1 - z_2|^2} |dz_1| |dz_2| \\
 =&\int_{\left\langle i,-i\right\rangle}\int_{\left\langle -i, i\right\rangle} 
\frac{|\log|\psi'(-\bar{z}_1)| - \log |\psi'(z_2)||^2}{|z_1 - z_2|^2} |dz_1| |dz_2|\\
 =&\int_{\left\langle -i,i\right\rangle}\int_{\left\langle -i, i\right\rangle} 
\frac{|\log|\psi'(z_1)| - \log|\psi'(z_2)||^2}{|-\bar{z}_1 - z_2|^2} |dz_1| |dz_2|\\
 \leq& \int_{\left\langle -i,i\right\rangle}\int_{\left\langle -i, i\right\rangle} 
\frac{|\log|\psi'(z_1)| - \log|\psi'(z_2)||^2}{|z_1 - z_2|^2} |dz_1| |dz_2| < \infty.
\end{align*}
We conclude that $\log|\hat \psi'| \in H^{\frac{1}{2}}(\mathbb T)$; that is $\hat\psi \in \text{\rm WP}(\mathbb T)$. Since the set $\text{\rm WP}(\mathbb T)$ has the group structure  \cite{CuiIAAhomeomorphism00,TTbook}, we have $\hat\varphi \in \text{\rm WP}(\mathbb T)$. 
\end{proof}

\begin{lem}\label{Ma}
 Let $z_0$ be a given point in $\mathbb D$ with $|z_0| < r$. There exists a quasiconformal mapping $q$ of $\mathbb D$ such that $q = \mathrm{id}$ on the ring $r \leq |z| < 1$ and $q(z_0) \in (-r, r)$.
\end{lem}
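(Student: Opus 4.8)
The plan is to realize $q$ as a \emph{radial twist}: a map that rotates each circle $\{|z| = \rho\}$ onto itself through an angle depending only on $\rho$, with the angle vanishing once $\rho \geq r$ so that $q = \mathrm{id}$ on the outer ring, and tuned so that the orbit of $z_0$ lands on the real axis. Write $z_0 = \rho_0 e^{i\theta_0}$ with $\rho_0 = |z_0| < r$; one may assume $z_0 \neq 0$ and $\theta_0 \in (0, 2\pi)$, since otherwise $q = \mathrm{id}$ already works.

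First I would fix a Lipschitz function $\beta\colon [0,1) \to \mathbb R$ with $\beta(\rho_0) = -\theta_0$ and $\beta(\rho) = 0$ for $\rho \geq r$ --- for instance the piecewise-linear function equal to $-\theta_0$ on $[0,\rho_0]$, interpolating linearly to $0$ across $[\rho_0, r]$, and vanishing on $[r,1)$. Then I would set
\[
q(z) = z\, e^{i\beta(|z|)}, \qquad z \in \mathbb D.
\]
By construction $q$ carries each circle $\{|z|=\rho\}$ bijectively onto itself, so it is a homeomorphism of $\mathbb D$; since $\beta(r) = 0$ it fixes $\{|z| = r\}$ pointwise and equals the identity on $r \leq |z| < 1$, and $q(z_0) = \rho_0 e^{i(\theta_0 - \theta_0)} = \rho_0 \in (-r,r)$, as required.

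It remains to verify that $q$ is quasiconformal, and here I would compute the complex dilatation directly. Writing $\rho = |z| = \sqrt{z\bar z}$, so that $\partial_z \rho = \bar z/(2\rho)$ and $\partial_{\bar z}\rho = z/(2\rho)$, differentiation of $q = z e^{i\beta(\rho)}$ gives
\[
q_z = e^{i\beta}\Big(1 + \tfrac{i}{2}\rho\,\beta'(\rho)\Big), \qquad q_{\bar z} = e^{i\beta}\,\frac{i\,\beta'(\rho)}{2\rho}\,z^2,
\]
so that, with $s := \tfrac12\,\rho\,\beta'(\rho)$,
\[
|\mu_q| = \Big|\frac{q_{\bar z}}{q_z}\Big| = \frac{|s|}{\sqrt{1+s^2}} < 1.
\]
Because $\beta$ is Lipschitz and $\rho \leq r < 1$, the quantity $s$ is uniformly bounded, whence $\|\mu_q\|_\infty \leq k$ for some $k < 1$; thus $q$ is $K$-quasiconformal with $K = (1+k)/(1-k)$.

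The construction is elementary; the one point deserving care --- and the main (mild) obstacle --- is securing the uniform bound $\|\mu_q\|_\infty < 1$ via the dilatation computation above, together with checking that gluing the twist on $\{|z|\le r\}$ to the identity on $\{r \le |z| < 1\}$ yields a genuine homeomorphism of $\mathbb D$, which holds precisely because $\beta(r) = 0$ forces the two pieces to agree on $\{|z| = r\}$.
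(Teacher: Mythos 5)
Your proof is correct, and it takes a genuinely different, more self-contained route than the paper's. The paper transports the problem to the upper half-plane: it chooses a M\"obius transformation $T\colon D(0,r)\to\mathbb H$ with $T(-r)=0$ and $T(r)=\infty$, applies an explicit angular shear $\tilde q$ of $\mathbb H$ that fixes $\mathbb R$ pointwise and moves $p=T(z_0)$ to the point $|p|e^{i\pi/2}$ on the imaginary axis, and sets $q=T^{-1}\circ\tilde q\circ T$ on $D(0,r)$, extended by the identity outside; since $\tilde q|_{\mathbb R}=\mathrm{id}$ the two pieces glue along $|z|=r$, and since the imaginary axis is the $T$-image of the diameter $(-r,r)$, one gets $q(z_0)\in(-r,r)$. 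Your radial twist $q(z)=ze^{i\beta(|z|)}$ achieves the same goal intrinsically in the disk: the gluing along $|z|=r$ is automatic from $\beta(r)=0$, no conjugation is needed, and you obtain an explicit uniform dilatation bound $|\mu_q|=|s|/\sqrt{1+s^2}$ with $s=\tfrac12\rho\,\beta'(\rho)$ controlled by the Lipschitz constant of $\beta$; your derivative computation checks out, and in fact the Jacobian is $|q_z|^2-|q_{\bar z}|^2\equiv 1$, confirming the map is sense-preserving. One small point worth making explicit: with the piecewise-linear $\beta$ the map $q$ is Lipschitz but not $C^1$ (corners at $\rho_0$ and $r$), so the formulas for $q_z$, $q_{\bar z}$ hold only almost everywhere; since $q$ is globally Lipschitz its a.e.\ partials coincide with the distributional ones, which is exactly what the analytic definition of quasiconformality used in the paper requires, so this is harmless. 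Both constructions yield $\mu_q$ supported in $\{|z|<r\}$ with $\Vert\mu_q\Vert_\infty<1$, which is all that the application in the sufficiency proof uses (there only $|\mu|\le 1$ on $\mathbb D_r$ enters the energy estimate); the paper's half-plane shear is the same kind of elementary ``angle-stretching'' device as your twist, just performed after a M\"obius change of coordinates, whereas your version trades that conjugation for a transparent circle-by-circle rotation and an immediate verification that $q$ is a homeomorphism equal to the identity on the ring.
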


\begin{proof}
Set $T$ to be a M\"obius transformation from $D(0, r)$, the open disk of radius $r$ centered at $0$, onto the upper-half plane $\mathbb H$ such that $T(-r) = 0$ and $T(r) = \infty$. Denote $p=T(z_0)$. We construct a quasiconformal mapping $\tilde q$ of $\mathbb H$ such that $\tilde q|_{\mathbb R} = \mathrm{id}$ and $\tilde q(p) = |p|e^{i\frac{\pi}{2}}$ locating on the imaginary line as follows: 
\begin{equation}
 \notag \tilde q(z) = 
 \begin{cases}
 |z|e^{i \left( \frac{\pi/2}{\arg p} \arg z\right)}, & \arg z \leq \arg p,
 \\
 |z| e^{\left(\frac{-\pi/2}{\pi - \arg p}(\pi - \arg z) + \pi\right)}, & \arg z \geq \arg p.
 \end{cases}
 \end{equation}
\begin{figure}
 \centering
 \includegraphics[width=10cm]{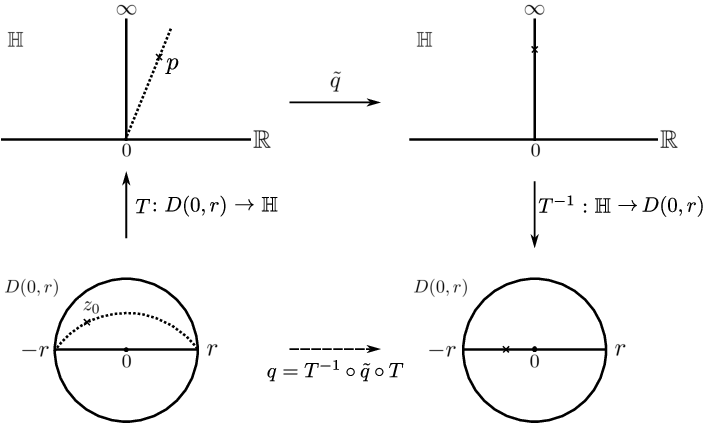}
 \caption{Illustration of the construction of the map $q\colon D(0,r)\to D(0,r)$}
 \label{fig1}
\end{figure}
Then
 \begin{equation}
 \notag q(z) = 
 \begin{cases}
 T ^{-1} \circ \tilde q \circ T (z), & 0 \leq |z| \leq r,\\
 \mathrm{id}, & r < |z| < 1
 \end{cases}
 \end{equation}
is the desired quasiconformal mapping (See Fig. 1 for a picturesque description of the construction). 
\end{proof}

\subsection{Proof of the sufficiency}

Recall that $\phi$ is a sense-reversing homeomorphism of $I = I^{+} \cup I^{-}$, and $\tau$ is the unique M\"obius transformation of $\mathbb T$ that sends $-i, 1, i$ to $\alpha^{-}, 1, \alpha^{+}$, respectively. In particular, it maps $\left\langle 1, i \right\rangle$ to $I^{+}$. Now let us use $\phi$ on $I^{+}$ to construct a sense-preserving homeomorphism of $\mathbb T$ as follows: 
 \begin{equation}
 \notag \psi (z)= 
 \begin{cases}
 z, & z \in \left\langle 1, -1 \right\rangle,\\
 \tau^{-1}\circ\phi\circ\tau(\bar z) ,& z \in \left\langle -i,1 \right\rangle,\\
 -\overline{\tau^{-1}\circ\phi\circ\tau(-z)},
 & z \in \left\langle -1, -i \right\rangle.
 \end{cases}
 \end{equation}
Notice that $\psi(z) = -\overline{\psi(-\bar z)}$ on $\left\langle i, -i \right\rangle$; that is precisely the reflection of $\psi$ with respect to the imaginary line. It is quite clear that $\psi$ is a sense-preserving homeomorphism of $\mathbb T$. With the above notations, we have the following further result. 

\begin{prop}\label{psi}
If the sense-reversing homeomorphism $\phi \in \text{\rm WP}(I^{+})$ and satisfies \eqref{extra}, then the sense-preserving homeomorphism $\psi \in \text{\rm WP}(\mathbb T)$. 
\end{prop}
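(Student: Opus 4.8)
The plan is to verify directly the two defining properties of $\textrm{WP}(\mathbb{T})$ for $\psi$: absolute continuity, and $\log|\psi'|\in H^{\frac{1}{2}}(\mathbb{T})$. Absolute continuity is the routine part. On $\left\langle 1,-1\right\rangle$ the map $\psi$ is the identity; on $\left\langle -i,1\right\rangle$ it is $\tau^{-1}\circ\phi\circ\tau(\bar z)$, a composition of the M\"obius maps $\tau^{\pm1}$ (smooth), the absolutely continuous map $\phi\in\textrm{WP}(I^{+})$, and the reflection $z\mapsto\bar z$; and on $\left\langle -1,-i\right\rangle$ it is built from these by the reflection $z\mapsto-\overline{\psi(-\bar z)}$. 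Since the pieces are each absolutely continuous and glue continuously, $\psi$ is absolutely continuous on $\mathbb{T}$. Writing $u=\log|\psi'|$, the reflection symmetry $\psi(z)=-\overline{\psi(-\bar z)}$ yields $u(z)=u(-\bar z)$, so $u$ is determined by its restriction to the right half-circle $R=\left\langle -i,i\right\rangle$, on which $u\equiv 0$ on $C:=\left\langle 1,i\right\rangle$ and $u(z)=\log|\Phi'(\bar z)|$ on $A:=\left\langle -i,1\right\rangle$, where $\Phi:=\tau^{-1}\circ\phi\circ\tau$.

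To control the seminorm I would first reduce to the right half-circle exactly as in the proof of Lemma \ref{extension}(2). Splitting $\mathbb{T}=R\cup L$ with $L=\left\langle i,-i\right\rangle$, the $L\times L$ contribution to the $H^{\frac{1}{2}}$-seminorm equals the $R\times R$ one by the symmetry $u(z)=u(-\bar z)$, while the $L\times R$ contribution is dominated by the $R\times R$ contribution via $|-\bar z_1-z_2|\geq|z_1-z_2|$ for $z_1,z_2\in R$. It therefore suffices to bound $\iint_{R\times R}|u(z_1)-u(z_2)|^2/|z_1-z_2|^2\,|dz_1||dz_2|$, which I split along $R=A\cup C$ into the three pieces $A\times A$, $A\times C$, and $C\times C$.

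The $C\times C$ piece vanishes since $u\equiv 0$ there. For the $A\times A$ piece, the isometry $z\mapsto\bar z$ of $\mathbb{T}$ turns it into $\|\log|\Phi'|\|_{H^{\frac{1}{2}}(C)}^2$, which is finite because $\Phi=\tau^{-1}\circ\phi\circ\tau\in\textrm{WP}(C)$; this follows from $\phi\in\textrm{WP}(I^{+})$ by the same chain-rule decomposition $\log|\Phi'|=\log|(\tau^{-1})'|\circ\phi\circ\tau+\log|\phi'|\circ\tau+\log|\tau'|$ employed in Lemma \ref{extension}(2), each summand lying in $H^{\frac{1}{2}}$ by the smoothness of $\tau^{\pm1}$, by Proposition \ref{pull} together with Lemma \ref{H}, and by the hyperbolic-isometry identity $|\tau(z_1)-\tau(z_2)|^2=|z_1-z_2|^2|\tau'(z_1)||\tau'(z_2)|$. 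The remaining $A\times C$ piece is the decisive one: since $u(z_2)=0$ for $z_2\in C$, it equals $\iint_{A\times C}\log^2|\Phi'(\bar z_1)|/|z_1-z_2|^2\,|dz_1||dz_2|$, and reflecting both variables across the real axis, $z_j\mapsto\bar z_j$ — an isometry of $\mathbb{T}$ that preserves $|z_1-z_2|$ and interchanges $A$ with $C$ — converts it precisely into the integral \eqref{extra}, hence making it finite by hypothesis.

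This last point is the crux and the main obstacle: neither $\phi\in\textrm{WP}(I^{+})$ nor the $\textrm{WP}$-invariance of M\"obius conjugation controls the cross interaction at the junction point $1$, where $\psi$ switches from the identity to $\tau^{-1}\circ\phi\circ\tau$. The condition \eqref{extra} is exactly the quantitative statement that this junction does not destroy $H^{\frac{1}{2}}$-regularity, and recognizing the $A\times C$ term as \eqref{extra} after the reflection change of variables is the heart of the argument. Combining the three estimates gives $u\in H^{\frac{1}{2}}(R)$, and hence, by the symmetrization of the first step, $\log|\psi'|=u\in H^{\frac{1}{2}}(\mathbb{T})$; together with absolute continuity this yields $\psi\in\textrm{WP}(\mathbb{T})$.
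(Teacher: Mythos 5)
Your proof is correct and follows essentially the same route as the paper: the paper also verifies absolute continuity and then splits the $H^{\frac{1}{2}}$-seminorm of $\log|\psi'|$ into pieces ($J_1,\dots,J_6$) over the arcs on which $\psi$ is defined by cases, handling the diagonal and reflected pieces exactly as in the proof of Lemma \ref{extension} (including the inequality $|-\bar z_1-z_2|\geq|z_1-z_2|$) and identifying the cross term between the identity arc and the $\phi$-arc with condition \eqref{extra} via the reflection $z\mapsto\bar z$ — precisely your $A\times C$ term. Your organization (first reducing to the right half-circle by the symmetry $u(z)=u(-\bar z)$, then splitting $R=A\cup C$ into three pieces) is a mild repackaging of the paper's six-term decomposition rather than a different argument, and if anything is slightly cleaner, since your cross term maps exactly onto the domain of \eqref{extra}, whereas the paper's $J_5$ ranges over the whole upper half-circle and implicitly needs the separated-arc contribution to be absorbed as well.
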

\begin{proof}
It is easy to check that $\psi$ is absolutely continuous on $\mathbb T$. 
 Now we shall prove that 
 \[
\int_{\mathbb{T}}\int_{\mathbb{T}}\frac{\left|\log|\psi'(z_1)|-\log|\psi'(z_2)|\right|^2}{|z_1-z_2|^2} |dz_1||dz_2| < \infty.
 \]
 For this purpose, we divide it into several parts as follows: 
\begin{align*}
&\int_{\mathbb{T}}\int_{\mathbb{T}}\frac{\left|\log|\psi'(z_1)|-\log|\psi'(z_2)|\right|^2}{|z_1-z_2|^2} |dz_1||dz_2| \\
 =& \int_{\left\langle 1,-1\right\rangle}\int_{\left\langle 1,-1\right\rangle} \cdot
 \cdot + \int_{\left\langle -i,1\right\rangle}\int_{\left\langle -i,1\right\rangle} 
 \cdot
 \cdot+\int_{\left\langle -1,-i\right\rangle}\int_{\left\langle -1,-i\right\rangle}\cdot
 \cdot\\
 & +2\int_{\left\langle -1,-i\right\rangle}\int_{\left\langle -i,1\right\rangle} \cdot
 \cdot+2\int_{\left\langle -i,1\right\rangle}\int_{\left\langle 1,-1\right\rangle} \cdot
 \cdot+2\int_{\left\langle -1,-i\right\rangle}\int_{\left\langle 1,-1\right\rangle}\cdot
 \cdot\\
 =&: J_1+J_2+J_3+2J_4+2J_5+2J_6.
\end{align*}
Obviously, $J_1$ is bounded. The boundedness of $J_2$, $J_3$, and $J_4$ follows from the proof of Lemma \ref{extension}. The boundedness of $J_6$ follows from that of $J_5$. The boundedness of $J_5$ is from the condition \eqref{extra}. This completes the proof. 
\end{proof}

Having laid the necessary groundwork, we are now ready to give a proof of the sufficiency. 
\begin{proof}[Proof of the sufficiency]
  Suppose that $\phi \in \textrm{WP}(I^{+})$ and \eqref{extra} holds. 
It follows from Proposition \ref{psi} that the sense-preserving homeomorphism $\psi\in \mathrm{WP}(\mathbb{T})$. Then the barycentric extension of $\psi$ to $\mathbb D$, still denoted by $\psi$, is a quasiconformal mapping of $\mathbb D$ whose complex dilatation $\mu$ is 2-integrable under the Poincar\'e metric of $\mathbb D$. Obviously, there exists some $r < 1$ such that $|\psi^{-1}(\tau^{-1}(0))| < r$. We conclude by Lemma \ref{Ma} that $\beta = q(\psi^{-1}(\tau^{-1}(0))) \in (-1, 1)$. Hence, there is a unique $0 < t < 1$ such that a conformal map $h$ from $\mathbb D$ to $D_t$ exists with $h(\beta) = 0$, $h(i) = h(-i) = 1$ and $h(1) = t$. Indeed, $h$ is the composition of the maps 
\[
\frac{1-z}{1+z}, \quad c\sqrt{z^2 + 1} \quad \text{and} \quad\frac{1-z}{1+z},
\]
where $c = \left( \left(\frac{1 - \beta}{1 + \beta}\right)^2 + 1 \right)^{-\frac{1}{2}}$ and $t = \frac{1 - c}{1 + c}$. 
 \begin{figure}[htp]
 \centering
\includegraphics[width=12cm]{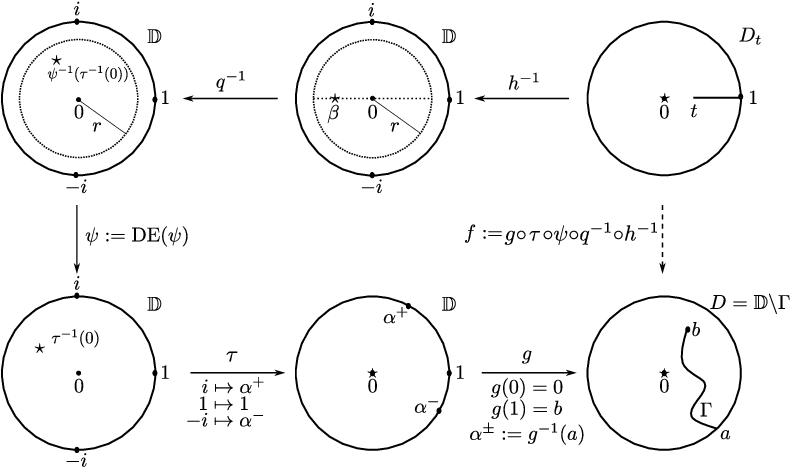}
 \caption{Illustration of the construction of the quasiconformal mapping $f\colon \mathbb{D}\to \mathbb{D}$}
 \label{fig2}
\end{figure}

We construct $f$ as $g \circ \tau \circ \psi \circ q^{-1} \circ h^{-1} $ (see Fig. \ref{fig2}). It is not hard to see that $f$ can be continuously extended to the boundary (i.e., $f$ extends continuously across $[t,1]$), mainly due to the way $\psi$ is constructed on $\mathbb T$. Precisely, for any $x \in (t, 1]$, its two preimages under $h$ are symmetric with respect to $\mathbb R$, denoted by $z \in \left\langle 1, i\right\rangle$ and $\bar z \in \left\langle -i, 1\right\rangle$. Notice that $q = \mathrm{id}$ on $\mathbb T$. Then it follows from 
\[
g\circ \tau\circ \psi(z) = g\circ \tau(z)
\]
and
\[
g\circ \tau\circ \psi(\bar z) = g\circ \tau \circ ( \tau^{-1}\circ \phi \circ \tau(z)) = g\circ \tau(z),
\]
that for any $x \in (t, 1]$ there is only one image point on the slit $\Gamma$ under the mapping $f$; namely, the free boundary arc $[t, 1)$ of $D_t$ corresponds to the free boundary arc $\Gamma$ of $D$. By the extension theorem for quasiconformal mappings (see \cite[Theorem 8.2]{BookLehtoVvirtanen73} and \cite[Proposition 4.9.9]{BookHubbard06}), $f$ can be extended to a homeomorphism of $\mathbb D$ onto $\mathbb D$. 
Thus we can define $f$ on $\mathbb D$ and it is a quasiconformal mapping of $\mathbb{D}$.

Noting that 
\[\mu_{f}=\mu_{\psi\circ q^{-1}}\circ h^{-1}\frac{\overline{(h^{-1})_z}}{(h^{-1})_z},\]
we have
\[
\iint_{D_t} |\mu_f(z)|^2\rho_{D_t}^2(z)dxdy = \iint_{\mathbb D} |\mu_{\psi\circ q^{-1}}(z)|^2\rho_{\mathbb D}^2(z)dxdy.
\]
Since $q$ is $\mathrm{id}$ on the annulus $r\leq |z| <1$, we see 
\begin{align*}
 \iint_{\mathbb D} |\mu_{\psi\circ q^{-1}}(z)|^2\rho_{\mathbb D}^2(z)dxdy =&\iint_{\mathbb{D}\setminus \mathbb{D}_r} \frac{|\mu_{ {\psi} \circ {q}^{-1} }|^2}{(1-|z|^2)^2}dxdy + \iint_{\mathbb{D}_r} \frac{|\mu_{ {\psi} \circ {q}^{-1} }|^2}{(1-|z|^2)^2}dxdy\\
 =& \iint_{\mathbb{D}\setminus \mathbb{D}_r} \frac{|\mu_{ 
 {\psi}}|^2}{(1-|z|^2)^2}dxdy + \iint_{\mathbb{D}_r} \frac{|\mu_{ {\psi} \circ {q}^{-1} }|^2}{(1-|z|^2)^2}dxdy\\
 \leq& \iint_{\mathbb{D}} \frac{|\mu_{ 
 {\psi}}|^2}{(1-|z|^2)^2}dxdy + \frac{\pi r^2}{(1-r^2)^2}\\
 <& \infty.
\end{align*}
This completes the proof of the sufficiency.
\end{proof}

\subsection{Proof of the necessity}
If $D = \mathbb D\setminus\Gamma$ is a Weil--Petersson quasislit-disk and $f(D_t) = D$. Then the complex dilatation $\mu_f$ of $f$ is $2$-integrable under the Poincar\'e metric of $D_t$; namely,
\[
\iint_{D_t}|\mu_f(z)|^2\rho^2_{D_t}(z) dxdy < \infty.
\]
\begin{figure}[htp]
 \centering
\includegraphics[width=10cm]{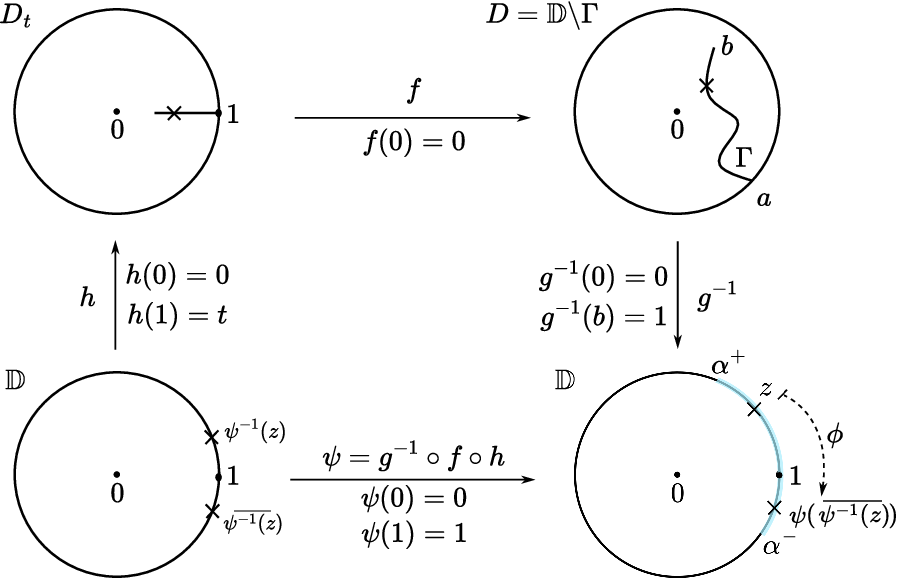}
\caption{Illustration of the construction of the conformal welding $\phi\colon I^{+}\cup I^{-}\to I^{+}\cup I^{-}$}
\end{figure}
Consider the conformal map $h\colon \mathbb D \to D_t$ with $h(0) = 0$, $h(1) = t$, $h(\pm i)=1$ (as constructed in the proof of the sufficiency and set $\beta=0$). Then $\psi = g^{-1}\circ f\circ h$ is a quasiconformal self-mapping of $\mathbb D$ fixing $0$ and $1$, and we have 
\begin{align*}
\iint_{\mathbb{D}}|\mu_\psi(z)|^2\rho^2_{\mathbb{D}}(z) dxdy=\iint_{\mathbb{D}}|\mu_{f}\circ h(z)|^2\rho^2_{\mathbb{D}} (z)dxdy=\iint_{D_t}|\mu_f(z)|^2\rho^2_{D_t}(z) dxdy < \infty.
\end{align*}
Therefore, the extension of $\psi$ to $\mathbb T$, still denoted by $\psi$, belongs to $\textrm{WP}(\mathbb T)$. It is clear to check that the conformal welding of $D$ is exactly $\phi(z) = \psi(\overline{\psi^{-1}(z)})$ on $I$ (see Fig. 3). Set $\iota(z)=\bar{z}$ so that $\phi(z) = \psi\circ \iota \circ \psi^{-1}(z)$. By an easy computation we see $\iota\circ\psi\circ\iota\in \mathrm{WP}(\mathbb{T})$, then $\iota\circ \phi(z) =\iota\circ\psi\circ \iota \circ \psi^{-1}(z)\in \mathrm{WP}(\mathbb{T})$. We thus have $\phi \in \textrm{WP}(\mathbb{T})$, and in particular $\phi \in \textrm{WP}(I^{+})$. 

It remains to show that $\phi$ satisfies \eqref{extra}. 
Notice that $\psi^{-1}\circ \tau\in \mathrm{WP}(\mathbb{T})$ and $\psi^{-1}\circ \tau$ maps $\langle 1,i\rangle$ onto itself. 
One can extend $\psi^{-1}\circ \tau$ on $\langle 1,i\rangle$ to the whole unit circle $\mathbb T$ by the following reflection: 
\begin{equation}
 \notag \Psi (z)= 
 \begin{cases}
 \psi^{-1}\circ \tau(z), & z \in \left\langle 1, i \right\rangle,\\
 \iota\circ\psi^{-1}\circ \tau\circ\iota(z),
 & z \in \left\langle -i, 1 \right\rangle,\\
 -\overline{\psi^{-1}\circ\tau(-\bar{z})}, & z \in \left\langle i, -1 \right\rangle,\\
 -\psi^{-1}\circ\tau(-z), & z \in \left\langle -1,-i \right\rangle.
 \end{cases}
 \end{equation}
A direct computation using \eqref{12} shows that $\log |\Psi'| \in H^{\frac{1}{2}}(\mathbb T)$ so that $\Psi\in \mathrm{WP}(\mathbb{T})$. Therefore, $(\iota\circ\psi^{-1}\circ \tau\circ\iota)^{-1}\circ \Psi\in \mathrm{WP}(\mathbb{T})$. Here,
 \begin{equation}
 \notag (\iota\circ\psi^{-1}\circ \tau\circ\iota)^{-1}\circ\Psi (z)= 
 \begin{cases}
 \iota\circ \tau^{-1}\circ\phi\circ \tau(z), & z \in \left\langle 1, i \right\rangle,\\
 z,
 & z \in \left\langle -i, 1 \right\rangle.
 \end{cases}
 \end{equation}
 Notice that $|(\iota\circ \tau^{-1}\circ\phi\circ \tau(z))'|=|(\tau^{-1}\circ\phi\circ \tau(z))'|$. We conclude that \eqref{extra} holds.

\medskip

\noindent\textbf{Acknowledgement} We would like to thank Katsuhiko Matsuzaki for having shared his idea leading to  Lemma \ref{Ma}.

\bibliographystyle{alpha}

\end{document}